\documentclass[english]{amsart}

\usepackage{amsmath}
\usepackage{amssymb}
\usepackage{amsthm}
\usepackage{amscd}
\usepackage{babel}
\usepackage[latin1]{inputenc}
\usepackage{graphicx}

\newtheorem{teor}{Theorem}
\newtheorem{cor}{Corollary}
\newtheorem{prop}{Proposition}
\newtheorem{con}{Conjecture}

\newtheorem{defi}{Definition}

\newtheorem*{rem}{Remark}

\renewcommand{\subjclassname}{AMS \textup{2010} Mathematics Subject
Classification\ }

\author{Jos\'{e} Mar\'{i}a Grau, Antonio M. Oller-Marc\'{e}n }

\title{Generalizing Giuga's conjecture}

%\address{Departamento de Matem\'{a}ticas, Universidad de Zaragoza\\
%C/Pedro Cerbuna 12, 50009 Zaragoza (Espa\~{n}a)} \email{oller@unizar.es}

\begin{document}
\maketitle

\begin{abstract} In 1950 G. Giuga studied the congruence $\sum_{j=1}^{n-1}  j^{n-1} \equiv -1$ (mod $n$) and conjectured that it was only satisfied by prime numbers. In this work we generalize Giuga's ideas considering, for each $k \in \mathbb{N}$, the congruence $\sum_{j=1}^{n-1}  j^{k(n-1)} \equiv -1$ (mod $n$). It particular, it is proved that a pair $(n,k)\in \mathbb{N}^2$ (with composite $n$) satisfies the congruence if and only if $n$ is a Giuga Number and  $ \lambda(n)/\gcd(\lambda(n),n-1)$ divides $k$. In passing, we establish some new characterizations of Giuga Numbers.
\end{abstract}
\subjclassname{11B99, 11A99, 11A07}

\keywords{Keywords: Giuga Numbers, Carmichael function}

\section{Introduction}
In 1950, G. Giuga conjectured (Giuga's conjecture, see \cite{GIU}) that if an integer $n$ satisfies $\displaystyle{\sum_{j=1}^{n-1}  j^{n-1} \equiv -1}$ (mod $n$), then $n$ must be a prime. Moreover, Giuga proved that $n$ is a counterexample to his conjecture if and only if for each prime divisor $p$ of $n$, $(p-1)|(n/p-1)$ and $p|(n/p-1)$. Using this characterization, he proved computationally that any counterexample has at least 1000 digits. Equipped with more computing power, E. Bedocchi (see \cite{BEDO}) later raised this bound to 1700 digits. Improving their method, D. Borwein, J. M. Borwein, P. B. Borwein and R. Girgensohn (see \cite{BOR}) determined that  any counterexample contains at least 3459 distinct primes and so has at least 13887 digits.

The second of the above conditions ($p|(n/p-1)$) motivated the following definition of Giuga Numbers, introduced by Borwein et al. in the paper \cite{BOR}, where these numbers were studied in detail.

\begin{defi} 
A Giuga Number is a composite number $n$ such that $p|(n/p-1)$ for every $p$, prime divisor of $n$.
\end{defi}

There are several characterizations of Giuga Numbers. The most important being the following.

\begin{prop} 
Let $n$ be a composite integer. Then, the following are equivalent:
\begin{itemize}
\item [i)] $n$ is a Giuga Number.
\item [ii)] $\displaystyle{\sum_{p|n} \frac{1}{p}-\prod_{p|n} \frac{1}{p} \in \mathbb{N}}$ (see \cite{GIU}).
\item[iii)] $\displaystyle{\sum_{j=1}^{n-1}  j^{\phi(n)} \equiv -1}$ (mod $n$), where $\phi$ is Euler's totient function (see \cite{BOR}).
\item [iv)] $nB_{\phi(n)} \equiv -1$ (mod $n$), where $B$ is a Bernoulli number (see \cite{AGO}).
\end{itemize}
\end{prop}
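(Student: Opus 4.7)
The plan is to pivot all three equivalences around the prime-by-prime Giuga condition $p \mid (n/p - 1)$. A useful first remark is that Giuga numbers are automatically squarefree: $p \mid (n/p - 1)$ forces $\gcd(p, n/p) = 1$, i.e.\ $v_p(n) = 1$. An analogous preliminary check would establish that each of (iii) and (iv) also forces $n$ squarefree (for (iii), if $p^2 \mid n$ then $n/p \equiv 0 \pmod p$ and the computation below yields $0 \not\equiv -1$), so throughout the argument I may write $n = p_1 \cdots p_r$ with distinct primes $p_i$.

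For (i) $\Leftrightarrow$ (ii), since $n$ is squarefree we have $\prod_{p \mid n} 1/p = 1/n$; clearing denominators rewrites (ii) as
\[
\sum_{i=1}^{r} \frac{n}{p_i} \equiv 1 \pmod{n}.
\]
Evaluating this modulo a fixed $p_k \mid n$, every summand $n/p_i$ with $i \neq k$ is divisible by $p_k$ and vanishes, so the congruence reduces to $n/p_k \equiv 1 \pmod{p_k}$ for each $k$. This is precisely (i), and the Chinese Remainder Theorem reassembles the equivalence.

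For (i) $\Leftrightarrow$ (iii), fix a prime $p \mid n$. Squarefreeness gives $(p-1) \mid \phi(n)$, so by Fermat's little theorem $j^{\phi(n)} \equiv 1 \pmod p$ when $\gcd(j,p) = 1$, and $\equiv 0 \pmod p$ otherwise. The residues $j \in \{1,\dots,n-1\}$ coprime to $p$ number $n - n/p$, hence
\[
\sum_{j=1}^{n-1} j^{\phi(n)} \equiv -\frac{n}{p} \pmod{p}.
\]
Condition (iii) thus amounts, prime by prime, to $n/p \equiv 1 \pmod p$, i.e.\ to (i).

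For (iii) $\Leftrightarrow$ (iv) I would invoke Faulhaber's formula
\[
\sum_{j=1}^{n-1} j^{k} \;=\; \frac{1}{k+1}\sum_{i=0}^{k} \binom{k+1}{i} B_i\, n^{k+1-i},
\]
and observe that every summand except the one with $i = k$ carries a factor $n^{k+1-i}$ with $k+1-i \geq 2$; the $i = k$ term is exactly $n B_k$, so formally $\sum_{j=1}^{n-1} j^{k} \equiv n B_k \pmod{n}$. The delicate---and, I expect, main---obstacle is that this congruence lives \emph{a priori} in $\mathbb{Q}$, so one must verify that the higher-order terms do not contribute a nonzero residue modulo $n$ once rational coefficients are cleared. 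This is where the von Staudt--Clausen theorem enters: for squarefree $n$ and $k = \phi(n)$, it pins the denominator of each $B_i$ tightly enough that the factor $n^{k+1-i}$ absorbs every $p \mid n$ that could appear. With that handled, specializing $k = \phi(n)$ yields (iv).
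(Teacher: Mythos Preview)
The paper does not supply a proof of this proposition: each equivalence is simply attributed to the literature (Giuga, Borwein et~al., Agoh) and no argument is given. So there is nothing in the paper to compare your attempt against; what you have written is essentially the standard route taken in those references.

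Two remarks on the sketch itself. First, in your treatment of (i)~$\Leftrightarrow$~(ii) you use $\prod_{p\mid n}1/p=1/n$, which requires $n$ to be squarefree; you argue that (i), (iii), (iv) each force squarefreeness, but you do not---and cannot---argue this from (ii), since condition~(ii) as written depends only on the radical of $n$. Indeed $n=60$ satisfies (ii) yet is not a Giuga number, so (i)~$\Leftrightarrow$~(ii) is literally false as stated in the paper; the version that works (and is what \cite{BOR} actually proves) has $1/n$ in place of $\prod_{p\mid n}1/p$, and with that correction your argument goes through. Second, for (iii)~$\Leftrightarrow$~(iv) you have correctly located the crux---controlling the rational denominators in Faulhaber's expansion via von~Staudt--Clausen---but, as you acknowledge, left the verification undone; that step is the genuine content of the equivalence and would need to be carried out for the argument to be complete.
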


It has been conjectured by Paolo P. Lava (see \cite{LAVA}) that Giuga Numbers are the solutions of the differential equation $n'=n+1$, with $n'$ being the arithmetic derivative of $n$.

Up to date only thirteen Giuga Numbers are known (see A007850 in the \emph{On-Line Encyclopedia of Integer Sequences}):
\begin{itemize}
\item With 3 factors: 
$$\textbf{30} = 2 \cdot 3 \cdot 5.$$
\item With 4 factors:
\begin{align*}
\textbf{858}&=2 \cdot 3 \cdot 11 \cdot 13,\\
\textbf{1722}&=2 \cdot 3 \cdot 7 \cdot 41.
\end{align*}
\item With 5 factors:
$$\textbf{66198} = 2 \cdot 3 \cdot 11 \cdot 17 \cdot 59.$$
\item With 6 factors:
\begin{align*}
\textbf{2214408306}&=2 \cdot 3 \cdot 11 \cdot 23 \cdot 31 \cdot 47057,\\
\textbf{24423128562}&=2 \cdot 3 \cdot 7 \cdot 43 \cdot 3041 \cdot 4447.
\end{align*}
\item With 7 factors:
\begin{align*}
\textbf{432749205173838}&=2 \cdot 3  \cdot 7 \cdot 59 \cdot 163 \cdot 1381 \cdot 775807,\\
\textbf{14737133470010574}&=2 \cdot 3 \cdot 7 \cdot 71 \cdot 103 \cdot 67213 \cdot 713863,\\
\textbf{550843391309130318}&=2 \cdot 3 \cdot 7 \cdot 71 \cdot 103 \cdot 61559 \cdot 29133437.
\end{align*}
\item With 8 factors:
\begin{align*}
\textbf{244197000982499715087866346}=\ & 2 \cdot 3 \cdot 11 \cdot 23 \cdot 31 \cdot 47137 \cdot 28282147 \cdot \\ &3892535183,\\
\textbf{554079914617070801288578559178}=\ & 2 \cdot 3 \cdot 11 \cdot 23 \cdot 31 \cdot 47059 \cdot 2259696349 \cdot\\ & 110725121051,\\
\textbf{1910667181420507984555759916338506}=\ & 2 \cdot 3 \cdot 7 \cdot 43\cdots 1831 \cdot 138683 \cdot 2861051 \cdot \\ &1456230512169437.
\end{align*}
\end{itemize}

There are no other Giuga Numbers with less than 8 prime factors. There is another known Giuga Number (found by Frederick Schneider in 2006) which has 10 prime factors, but it is not known if there is any Giuga Number between this and the previous ones. This biggest known Giuga Number is:

\begin{align*}
&\textbf{4200017949707747062038711509670656632404195753751630609228764416}\\ &\textbf{142557211582098432545190323474818}= 2 \cdot 3 \cdot 11 \cdot 23 \cdot 47059 \cdot 2217342227 \cdot\\ &1729101023519 \cdot 8491659218261819498490029296021 \cdot\\
&658254480569119734123541298976556403.
\end{align*}

Observe that all known Giuga Numbers are even. If an odd Giuga Number exists, it must be the product of at least 14 primes. It is not even known if there are infinitely many Giuga Numbers.

The most important result about Giuga's conjectureit is due to Giuga himself (see \cite{GIU} or \cite{BOR}).

\begin{prop} 
A composite integer $n$ is a counterexample to Giuga's conjecture if and only if it is both a Carmichael Number and a Giuga Number.
\end{prop}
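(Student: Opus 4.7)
The plan is to reduce the statement to Giuga's own characterization of counterexamples (already quoted in the introduction) and then translate his two divisibility conditions into the Carmichael/Giuga pair. Concretely, Giuga showed that a composite $n$ is a counterexample iff for every prime $p\mid n$ one has both $(p-1)\mid (n/p-1)$ and $p\mid (n/p-1)$. I would take this as the starting point and manipulate each clause separately.

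First I would observe that the second clause, $p\mid(n/p-1)$ for every prime divisor $p$ of $n$, is literally the definition of a Giuga Number, so that half is immediate. As a useful by-product, this clause forces $p\nmid n/p$, hence $n$ must be squarefree, which I will need in the next step.

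Next I would show that, in the presence of squarefreeness, the first clause $(p-1)\mid(n/p-1)$ for all $p\mid n$ is equivalent to Korselt's criterion for $n$ to be a Carmichael Number, namely that $n$ is squarefree and $(p-1)\mid(n-1)$ for all $p\mid n$. The key identity is
\[
n-1 \;=\; p\cdot\frac{n}{p}-1 \;=\; (p-1)\cdot\frac{n}{p}+\left(\frac{n}{p}-1\right),
\]
which shows that $(p-1)\mid(n/p-1)$ iff $(p-1)\mid(n-1)$. Combined with the squarefreeness obtained from the Giuga clause, this gives exactly the Carmichael property.

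Putting the two translations together yields the equivalence claimed in the proposition. The only mildly delicate point is making sure the squarefreeness needed for Korselt's criterion is genuinely available in both directions: forward it comes for free from the Giuga clause, and backward it is already part of being a Carmichael Number. Beyond this, the argument is purely formal, so I do not foresee a serious obstacle.
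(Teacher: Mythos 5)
Your argument is correct, but it follows the classical route rather than the one this paper develops. You start from Giuga's own divisibility characterization of counterexamples (for every prime $p\mid n$, both $(p-1)\mid(\tfrac{n}{p}-1)$ and $p\mid(\tfrac{n}{p}-1)$), read off the Giuga Number property and squarefreeness from the second clause, and convert the first clause into Korselt's criterion via the identity $n-1=(p-1)\tfrac{n}{p}+\left(\tfrac{n}{p}-1\right)$; all of these steps check out, including the care you take that squarefreeness is available in both directions. This is essentially Giuga's original argument, and indeed the paper states the proposition with only a citation at that point. When the paper does reprove the statement (Corollary 2 and, for the full equivalence, Corollary 3 via Theorem 1), it deliberately takes a different path: it works directly with the power sums, using Proposition 3 to replace the exponent $n-1$ by a multiple of $\lambda(n)$, and concludes that $n\in\mathcal{G}_1$ if and only if $n\in\mathcal{G}$ and $\lambda(n)\mid(n-1)$, i.e.\ if and only if $n$ is a Giuga Number satisfying Carmichael's criterion --- explicitly avoiding Korselt's criterion, which your proof relies on. Your version is shorter and more elementary granted the two classical characterizations, but it outsources the real content (the equivalence between the congruence $\sum_{j=1}^{n-1}j^{n-1}\equiv-1 \pmod n$ and the divisibility conditions) to the quoted theorem of Giuga; the paper's version buys a self-contained treatment that extends uniformly to all the congruences $\sum_{j=1}^{n-1}j^{k(n-1)}\equiv-1 \pmod n$, which is precisely the generalization the paper is after.
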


On the other hand, Luca, Pomerance and Shparlinski (see \cite{LUCA}) have established the following bound for the counter function of the counterexamples to Giuga's conjecture thus improving a result by Tipu in \cite{TIPU}: 
$$|\{n<X | n \textrm{ is countarexample of Giuga's conjecture }\}| \ll \frac{X^{\frac{1}{2}}}{(\log(X))^2}.$$
Finally we will mention that Bernd C. Kellner \cite{KELL} has stablished that Giuga's conjecture is equivalent to the following conjecture by Agoh.

\begin{con}[Takashi Agoh, 1990] 
Let $B_k$ denote the k-th Bernoulli number. Then, $n$ is a prime if and only if $nB_n \equiv -1 (\textrm{mod. n})$
\end{con}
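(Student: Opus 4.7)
The natural strategy, essentially announced in the paragraph preceding the statement, is to invoke Kellner's equivalence between Agoh's conjecture and Giuga's conjecture and then attack the latter using the characterizations of Proposition 1 and the structural result of Proposition 2.

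First I would dispose of the easy direction. For $n$ prime, the von Staudt--Clausen theorem pins down the denominator of $B_n$, and combined with the standard recurrence $\sum_{k=0}^{n}\binom{n+1}{k}B_k=0$ and Fermat's little theorem this reduces $nB_n\equiv -1\pmod n$ to an elementary calculation.

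For the converse, starting from $nB_n\equiv-1\pmod n$ with $n$ composite, I would pass to power sums via Faulhaber's formula
\[
\sum_{j=1}^{n-1} j^{m}=\frac{1}{m+1}\sum_{i=0}^{m}\binom{m+1}{i}B_i\, n^{m+1-i},
\]
and use von Staudt--Clausen to control the denominators of the Bernoulli numbers involved. This is the step that Kellner carries out: it converts the Agoh congruence into $\sum_{j=1}^{n-1} j^{n-1}\equiv-1\pmod n$, which is the hypothesis of Giuga's conjecture. By Proposition 2, such an $n$ would have to be simultaneously a Carmichael Number and a Giuga Number.

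The main obstacle is the last step: showing that no composite integer is at once a Carmichael and a Giuga Number. This is precisely Giuga's conjecture, which has been open for more than sixty years. The tools quoted in the introduction (the $p\mid(n/p-1)$ and $(p-1)\mid(n/p-1)$ divisibility conditions, the identity $\sum_{p\mid n}1/p-\prod_{p\mid n}1/p\in\mathbb N$, and the density bound of Luca--Pomerance--Shparlinski) only push a hypothetical counterexample to astronomical size without excluding it. A genuine resolution would presumably require exhibiting an arithmetic incompatibility between the Carmichael condition $(p-1)\mid(n/p-1)$ and the stricter Giuga condition $p\mid(n/p-1)$, perhaps by a sharp bound relating the smallest and largest prime factors of $n$, which is beyond the reach of the methods summarized above.
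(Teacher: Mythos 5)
The statement you were asked about is not a theorem of the paper at all: it is Agoh's conjecture, stated in a \verb|con| environment precisely because it is open. The paper offers no proof and could not, since (as the paper itself notes via Kellner's result) the statement is equivalent to Giuga's conjecture, which has resisted proof since 1950. Your proposal correctly reconstructs this reduction: the easy direction for prime $n$ via von Staudt--Clausen and the Bernoulli recurrence, the passage from $nB_n\equiv -1\pmod n$ to $\sum_{j=1}^{n-1}j^{n-1}\equiv -1\pmod n$ through Faulhaber's formula (Kellner's contribution), and then the appeal to the paper's Proposition 2 to conclude that a composite counterexample would have to be simultaneously a Carmichael Number and a Giuga Number.

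The gap is the one you yourself name in your final paragraph: nothing in the paper, and nothing currently known, excludes the existence of a composite integer that is both a Carmichael and a Giuga Number. The divisibility conditions $(p-1)\mid(n/p-1)$ and $p\mid(n/p-1)$, the sum-minus-product characterization, and the counting bounds of Luca--Pomerance--Shparlinski only constrain the size and shape of a hypothetical counterexample. So what you have written is an accurate account of the state of the art and of how the paper's machinery bears on the question, but it is not a proof, and it should not be presented as one; the honest conclusion is that the statement remains a conjecture, exactly as the paper labels it.
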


\section{New characterizations for Giuga Numbers}

The following result will be the core of the paper. It stablishes that in Proposition 1 iii) we can replace Euler's totient function $\phi(n)$ by Carmichael's function $\lambda(n)$ or by any multipli of $\phi(n)$ or $\lambda(n)$.

\begin{prop} 
For every natural numbers $A$, $B$ and $N$ we have that:
$$\sum_{j=1}^{N-1}  j^{A \lambda(N)} \equiv  \sum_{j=1}^{N-1}  j^{B \phi(N)}\ \textrm{(mod $N$)}.$$
\end{prop}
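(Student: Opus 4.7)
The plan is to prove the stronger assertion that $\sum_{j=1}^{N-1} j^{K}\pmod{N}$ takes a common value for every positive integer $K$ divisible by $\lambda(N)$. Since $\lambda(N)\mid\phi(N)$, both $A\lambda(N)$ and $B\phi(N)$ are positive multiples of $\lambda(N)$, so the proposition follows immediately from this stronger claim. Writing $N=\prod p_i^{a_i}$ and invoking the Chinese Remainder Theorem, it is enough to verify that $\sum_{j=1}^{N-1} j^{K}\pmod{p^a}$ is independent of $K$ for each prime power $p^{a}\parallel N$. Partitioning $\{0,1,\dots,N-1\}$ by residue class modulo $p^{a}$ (each class contributing exactly $N/p^{a}$ elements), this reduces to
\[
\sum_{j=1}^{N-1} j^{K}\equiv \frac{N}{p^{a}}\sum_{r=1}^{p^{a}-1} r^{K}\pmod{p^{a}},
\]
so the argument hinges on establishing the local identity
\[
S(a,K):=\sum_{r=1}^{p^{a}-1} r^{K}\equiv \phi(p^{a})\pmod{p^{a}}
\]
for every positive $K$ with $\lambda(p^{a})\mid K$.

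I would prove this identity by induction on $a$. The base case $a=1$ is Fermat's little theorem: each of the $p-1$ terms is congruent to $1$ modulo $p$. For the inductive step, split the sum according to whether $\gcd(r,p)=1$ or $p\mid r$. The coprime part contributes $\phi(p^{a})$ modulo $p^{a}$, since Carmichael's theorem gives $r^{K}\equiv 1\pmod{p^{a}}$ for every such $r$. The divisible part, written via $r=ps$, equals $p^{K}\,S(a-1,K)$; by induction $S(a-1,K)=\phi(p^{a-1})+p^{a-1}t$ for some integer $t$, so the divisible part becomes $p^{K}\phi(p^{a-1})+p^{K+a-1}t$. A short computation of $p$-adic valuations shows that both summands vanish modulo $p^{a}$ as soon as $K\geq 2$.

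The main obstacle, and essentially the only point demanding care, is verifying that the hypothesis $\lambda(p^{a})\mid K$ is already strong enough to force $K\geq 2$ throughout the induction. This reduces to the elementary bound $\lambda(p^{a})\geq 2$ for all prime powers $p^{a}\geq 3$; the only case requiring explicit checking is $p=2$, where $\lambda(4)=2$ and $\lambda(2^{a})=2^{a-2}\geq 2$ for $a\geq 3$. With this small verification in place the induction closes, each local sum collapses to the exponent-independent value $\frac{N}{p^{a}}\phi(p^{a})\pmod{p^{a}}$, and Chinese Remainder assembly yields the congruence claimed in the proposition.
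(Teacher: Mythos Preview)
Your argument is correct and follows the paper's own route: CRT reduction to each prime power $p^{a}\parallel N$, the identity $\sum_{j<N}j^{K}\equiv (N/p^{a})\sum_{r<p^{a}}r^{K}\pmod{p^{a}}$, and the split of the local sum into terms coprime to $p$ (contributing $\phi(p^{a})$ via Carmichael's theorem) and terms divisible by $p$. The only point of departure is how the non-coprime part is disposed of: the paper kills each such term outright using the inequality $A\lambda(N),\,B\phi(N)\ge a$ (which is immediate for odd $p$ but forces an ad~hoc verification for $2^{a}$ with $a\le 3$), whereas your induction on $a$ needs only $K\ge 2$ and hence handles all primes uniformly---a mild streamlining rather than a genuinely different idea.
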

\begin{proof}
Put $N=2^ap_1^{r_1}\cdots p_s^{r_s}$ with $p_i$ distinct odd primes. Choose $i\in \{1,\dots, s\}$. We have that:
$$\sum_{j=1}^{N-1} j^{A\lambda(N)} \equiv \frac{N}{p_i^{r_i}}\sum_{j=1}^{p_i^{r_i}-1} j^{A\lambda(N)}\ \textrm{(mod $p_i^{r_i}$)}.$$
$$\sum_{j=1}^{N-1} j^{B\phi(N)} \equiv \frac{N}{p_i^{r_i}}\sum_{j=1}^{p_i^{r_i}-1} j^{B\phi(N)}\ \textrm{(mod $p_i^{r_i}$)}.$$
Now, since both $A\lambda(N),\ B\phi(N)\geq r_i$, we get:
$$\sum_{j=1}^{p_i^{r_i}-1} j^{A\lambda(N)}=\sum_{\substack{1\leq j\leq p_i^{r_i}-1\\ (p_i,j)=1}} j^{A\lambda(N)}+\sum_{\substack{1\leq j\leq p_i^{r_i}-1\\ p_i|j}} j^{A\lambda(N)}\equiv \phi(p_i^{r_i})+0\ \textrm{(mod ($p_i^{r_i}$)}.$$
$$\sum_{j=1}^{p_i^{r_i}-1} j^{B\phi(N)}=\sum_{\substack{1\leq j\leq p_i^{r_i}-1\\ (p_i,j)=1}} j^{B\phi(N)}+\sum_{\substack{1\leq j\leq p_i^{r_i}-1\\ p_i|j}} j^{A\lambda(N)}\equiv \phi(p_i^{r_i})+0\ \textrm{(mod ($p_i^{r_i}$)}.$$
Consequently:
$$\sum_{j=1}^{N-1} j^{A\lambda(N)}\equiv\sum_{j=1}^{N-1} j^{B\phi(N)}\ \textrm{(mod $p_i^{r_i}$)}\ \textrm{for every $i=1,\dots,s$}.$$
Clearly if $N$ is odd the proof is complete. If $n$ is even we have that:
$$\sum_{j=1}^{N-1}j^{A\lambda(N)}\equiv\frac{N}{2^a}\sum_{j=1}^{2^a-1} j^{A\lambda(N)}\equiv\frac{N}{2^a}\left(\sum_{\substack{1\leq j\leq 2^a-1\\ \textrm{$j$ even}}} j^{A\lambda(N)} + 2^{a-1}\right)\ \textrm{(mod $2^a$)}.$$
$$\sum_{j=1}^{N-1}\equiv\frac{N}{2^a}\left(\sum_{\substack{1\leq j\leq 2^a-1\\ \textrm{$j$ even}}} j^{B\phi(N)} + 2^{a-1}\right)\ \textrm{(mod $2^a$)}.$$
Now, if $a=1,2$ or $3$ it can be easily verified that:
$$\sum_{\substack{1\leq j\leq 2^a-1\\ \textrm{$j$ even}}} j^{A\lambda(N)}\equiv\sum_{\substack{1\leq j\leq 2^a-1\\ \textrm{$j$ even}}} j^{B\phi(N)}\ \textrm{(mod $2^a$)}.$$
On the other hand, if $a\geq 4$ we have that $\phi(N)\geq\lambda(N)\geq a$ and, consequently $j^{A\lambda(N)}\equiv j^{B\phi(N)}\equiv 0$ (mod $2^a$) for every $1\leq j\leq 2^{a-1}$ even. Thus:
$$\sum_{j=1}^{N-1} j^{A\lambda(N)}\equiv\sum_{j=1}^{N-1} j^{B\phi(N)}\ \textrm{(mod $2^a$)}$$
and the result follows.
\end{proof}

The proposition above allows us to introduce some new characterizations of Giuga Numbers. Recall that a composite integer $n$ is said to be a Giuga Number if and only if $\displaystyle{\sum_{j=1}^{n-1} j^{\phi(n)}\equiv -1}$ (mod $n$).

\begin{cor} 
Let $n$ be any composite integer. Then the following are equivalent:
\begin{itemize}
\item[i)] $n$ is a Giuga Number.
\item[ii)] $\displaystyle{\sum_{j=1}^{n-1}  j^{\lambda(n)}} \equiv -1$ (mod $n$).
\item[iii)] For every positive integer $K$, $\displaystyle{\sum_{j=1}^{n-1}  j^{K\lambda(n)}} \equiv -1$ (mod $n$). 
\item [iv)] There exists a positive integer $K$ such that $\displaystyle{\sum_{j=1}^{n-1}  j^{K\lambda(n)}} \equiv -1$ (mod $n$).
\item[v)] There exists a positive integer $K$ such that $\displaystyle{\sum_{j=1}^{n-1}  j^{K\phi(n)} \equiv -1}$ (mod $n$).
\item[vi)] For every positive integer $K$, $\displaystyle{\sum_{j=1}^{n-1}  j^{K\phi(n)} \equiv -1}$ (mod $n$).
\end{itemize}
\end{cor}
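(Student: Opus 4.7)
The plan is to show that all six sums appearing in the corollary are congruent modulo $n$, so the ``$\equiv -1$'' condition is simultaneously true or false for all of them; equivalence with i) is then just the defining characterization of Giuga Numbers (via Proposition~1~iii)) together with this common value.

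Concretely, I will invoke Proposition 2 (applied with $N=n$) in a handful of instantiations. Taking $A=B=1$ gives $\sum_{j=1}^{n-1} j^{\lambda(n)} \equiv \sum_{j=1}^{n-1} j^{\phi(n)}\pmod n$, which establishes i)$\Leftrightarrow$ii). For any positive integer $K$, taking $A=K$, $B=1$ in Proposition 2 yields
$$\sum_{j=1}^{n-1} j^{K\lambda(n)} \equiv \sum_{j=1}^{n-1} j^{\phi(n)} \pmod n,$$
and symmetrically taking $A=1$, $B=K$ gives
$$\sum_{j=1}^{n-1} j^{K\phi(n)} \equiv \sum_{j=1}^{n-1} j^{\lambda(n)} \equiv \sum_{j=1}^{n-1} j^{\phi(n)} \pmod n.$$
Thus the residues modulo $n$ of all sums appearing in ii)--vi) coincide with the residue of $\sum_{j=1}^{n-1} j^{\phi(n)}$, independently of $K$.

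Given this, the equivalences fall out immediately. The implications vi)$\Rightarrow$v) and iii)$\Rightarrow$iv) are trivial. The reverse implications iv)$\Rightarrow$iii) and v)$\Rightarrow$vi) hold because the value of each sum is independent of $K$ modulo $n$ by the identities above, so if the congruence to $-1$ holds for some $K$ it holds for every $K$. The equivalences with i) then reduce to Proposition~1~iii): $n$ is a Giuga Number iff $\sum_{j=1}^{n-1} j^{\phi(n)}\equiv -1\pmod n$, and this is precisely the common value of all the sums in ii)--vi).

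There is essentially no obstacle here beyond carefully bookkeeping the quantifier structure: the entire content has already been distilled into Proposition 2, and the corollary is a direct packaging of its consequences together with the definition of a Giuga Number.
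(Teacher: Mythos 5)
Your proof is correct and is exactly the argument the paper intends: the paper's own proof is the one-line remark that the corollary ``follows readily from the previous proposition and from Proposition 1 iii),'' and you have simply spelled out the instantiations of $A$ and $B$ that make all six sums congruent to $\sum_{j=1}^{n-1} j^{\phi(n)}$ modulo $n$. (The only quibble is labeling: the key congruence result is Proposition 3 in the paper's numbering, not Proposition 2.)
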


\begin{proof} 
It follows readily from the previous proposition and from Proposition 1 iii).
\end{proof}

Proposition 1 also allows us to show in a different and novel way that an integer which is both a Carmichael Number and a Giuga Numberis a counterexample to Giuga's conjecture.

\begin{cor} 
If an integer $n$ is both a Carmichael Number and a Giuga Number, then:
$$\sum_{j=1}^{n-1}  j^{n-1} \equiv -1\ \textrm{(mod $n$)}.$$
\end{cor}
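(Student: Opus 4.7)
\bigskip

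\noindent\textbf{Proof plan for Corollary 2.}

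The plan is to reduce the statement immediately to Corollary 1 by using the defining property of Carmichael numbers. Recall that $n$ being a Carmichael number is equivalent to $\lambda(n)\mid n-1$ (Korselt's criterion). Hence there exists a positive integer $K$ such that $n-1=K\lambda(n)$.

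Next, I would invoke Corollary 1: since $n$ is a Giuga number, clause (iii) of that corollary gives
\[
\sum_{j=1}^{n-1} j^{K\lambda(n)}\equiv -1\ \textrm{(mod $n$)}
\]
for \emph{every} positive integer $K$, and in particular for the specific $K$ produced above. Substituting $K\lambda(n)=n-1$ into the exponent yields exactly the desired congruence $\sum_{j=1}^{n-1} j^{n-1}\equiv -1\ \textrm{(mod $n$)}$, completing the proof.

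There is essentially no obstacle here: the entire content of the corollary has already been absorbed into Proposition 2 and Corollary 1, whose role was precisely to decouple the exponent from the specific value $\phi(n)$ or $\lambda(n)$ and make it work for arbitrary multiples. The Carmichael hypothesis is used only to certify that $n-1$ is such a multiple of $\lambda(n)$, and the Giuga hypothesis is used only to activate Corollary 1. The novelty advertised in the remark before the corollary is precisely this clean factorisation of the argument through $\lambda(n)$ rather than through $\phi(n)$, which is what makes the Carmichael condition $\lambda(n)\mid n-1$ drop in without any extra computation.
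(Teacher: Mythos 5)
Your proof is correct and follows essentially the same route as the paper: both use the Carmichael hypothesis to write $n-1$ as a multiple of $\lambda(n)$ and then conclude via the machinery of Proposition 3 (you route through Corollary 1(iii), while the paper applies Proposition 3 directly with $A=k$ and $B=1$). One small terminological point: the property $\lambda(n)\mid (n-1)$ is Carmichael's criterion rather than Korselt's --- the remark following the corollary explicitly advertises that this proof \emph{avoids} Korselt's criterion (squarefreeness plus $p-1\mid n-1$ for each prime $p\mid n$) in favour of Carmichael's.
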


\begin{proof} 
If $n$ is a Carmichael Number, then $\lambda(n) | (n-1)$. If we put $k=\frac{n-1}{\lambda(n)}$, then we have:
$$S:=\sum_{j=1}^{n-1}  j^{n-1}=\sum_{j=1}^{n-1}  j^{k \lambda(n)}.$$
If, in addition, $n$ is a Giuga Number we can apply Proposition 3 with $A=k$ and $B=1$ to get $S\equiv  -1$ (mod $n$) as claimed.
\end{proof}

Although the previous result is not new, it is interesting to observe that the proof avoids the use of Korselt's criterion which is replaced by Carmichael's criterion, i.e., $\lambda(n)|(n-1)$.

The following result, which is also a consequence of Proposition 3, will allow us to generalize Giuga's ideas by considering the congruence $\displaystyle{\sum_{j=1}^{N-1}  j^{k(n-1)} \equiv  -1}$ (mod $n$) for each positive integer $k$.

\begin{cor} 
If an integer $n$ is a counterexample to Giuga's conjecture, then:
$$\sum_{j=1}^{n-1}  j^{k(n-1)} \equiv  -1\ \textrm{(mod $n$) for every positive integer $k$}.$$
\end{cor}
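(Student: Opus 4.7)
The plan is to combine the two structural facts we already have on the table: a counterexample to Giuga's conjecture is simultaneously a Carmichael Number and a Giuga Number (Proposition 3 of the paper), and Proposition 3 from Section 2 tells us that the power sums $\sum_{j=1}^{n-1} j^{A\lambda(n)}$ and $\sum_{j=1}^{n-1} j^{B\phi(n)}$ always agree modulo $n$. The previous corollary is the case $k=1$; the idea is that exactly the same argument works for arbitrary $k$.

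First, assume $n$ is a counterexample to Giuga's conjecture, so $n$ is Carmichael and Giuga. Being Carmichael gives $\lambda(n)\mid (n-1)$, hence for any positive integer $k$ the exponent $k(n-1)$ can be written as $A\lambda(n)$ with $A=k(n-1)/\lambda(n)\in\mathbb{N}$. This is the only place where the hypothesis $k\geq 1$ enters, and it is exactly the rewriting used in the proof of Corollary 2.

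Next, apply Proposition 3 with this $A$ and with $B=1$ to obtain
$$\sum_{j=1}^{n-1} j^{k(n-1)} \;=\; \sum_{j=1}^{n-1} j^{A\lambda(n)} \;\equiv\; \sum_{j=1}^{n-1} j^{\phi(n)} \pmod{n}.$$
Finally, since $n$ is a Giuga Number, Proposition 1 iii) yields $\sum_{j=1}^{n-1} j^{\phi(n)}\equiv -1 \pmod{n}$, and the desired congruence follows for every positive integer $k$.

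There is essentially no obstacle here, since the two inputs do all the work: the only thing to check is that the divisibility $\lambda(n)\mid (n-1)$ provided by the Carmichael condition remains valid after multiplying $n-1$ by $k$, which is trivial. The proof is therefore a one-line bookkeeping extension of Corollary 2, and this is why the statement is phrased as a corollary of Proposition 3.
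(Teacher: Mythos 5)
Your proof is correct and follows essentially the same route as the paper: both use the Carmichael property to rewrite the exponent $k(n-1)$ as a multiple of $\lambda(n)$, then invoke Proposition 3 together with the Giuga characterization $\sum_{j=1}^{n-1} j^{\phi(n)}\equiv -1 \pmod n$ to conclude. The only cosmetic difference is that the paper routes the last step through its Corollary 1 rather than citing Proposition 3 with $B=1$ and Proposition 1 iii) directly, which amounts to the same thing.
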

\begin{proof} 
If $n$ is a counterexample to Giuga's conjecture, then it is both a Carmichael and a Giuga Number. Being a Carmichael Number, we have that $\lambda(n) | (n-1)$ so if $\frac{k(n-1)}{\lambda(n)}=\widetilde{k}\in\mathbb{N}$ we get:
$$S:=\sum_{j=1}^{n-1}  j^{k(n-1)}=\sum_{j=1}^{n-1}  j^{k \lambda(n)\frac{(n-1)}{\lambda(n)}}=\sum_{j=1}^{n-1}  j^{\widetilde{k} \lambda(n)},$$
and, since $n$ is a Giuga Number it is enough to apply Corollary 1 and Proposition 3 to get $S\equiv  -1$ (mod $n$).
\end{proof}

\section{Generalizing Giuga's conjecture }

In this section we generalize Giuga's ideas in the following way: Do there exist integers $k$ and $n$ such that the congruence $\displaystyle{\sum_{j=1}^{n-1}  j^{k(n-1)} \equiv  -1}$ (mod $n$) is satisfied by some composite integer $n$? 

 In what follows we will denote the set of Giuga Numbers by $\mathcal{G}$ and the set of Carmichael Numbers by $\mathcal{C}$. Moreover, for every positive integer $k$ let us define the following sets:
$$\mathcal{G}_k:=\left\{n\in\mathbb{N}  \left|\textrm{ $n$ is composite, } \displaystyle{\sum_{j=1}^{n-1}  j^{k(n-1)} \equiv  -1}\right.\ \textrm{(mod $n$)}\right\},$$
$$\mathcal{K}_n:=\left\{k \in \mathbb{N} \left|\ \sum_{j=1}^{n-1}  j^{k(n-1)} \equiv  -1\ \right. \textrm{(mod $n$)}\ \right\}.$$

\begin{rem}
With the previous notation, Giuga's conjecture is equivalent to the statement $\mathcal{G}_1=\emptyset$. Also observe that, for every positive integer $k$:
$$ k \in \mathcal{K}_n\ \textrm{if and only if}\ n \in \mathcal{G}_k.$$
\end{rem}

\begin{teor} 
Let $n$ be a composite integer. Then the following are equivalent:
\begin{itemize}
\item[i)] $ n \in \mathcal{G}_k $.
\item[ii)] $n \in \mathcal{G}$  and $\displaystyle{\frac{\lambda(n)}{\gcd(\lambda(n),n-1)}}$ divides $k$.
\end{itemize}
\end{teor}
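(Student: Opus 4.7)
The plan is to prove the two implications separately, with (i)$\Rightarrow$(ii) carrying the bulk of the work.

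For (ii)$\Rightarrow$(i), set $d=\gcd(\lambda(n),n-1)$ and write $k=(\lambda(n)/d)K$ for some positive integer $K$; then $k(n-1)=\lambda(n)\cdot K(n-1)/d$ is an honest positive multiple of $\lambda(n)$. Applying item (iii) of Corollary 1 to the Giuga number $n$ immediately yields $n\in\mathcal{G}_k$.

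For (i)$\Rightarrow$(ii), the plan is to work prime power by prime power via the Chinese Remainder Theorem. For each $p^r\,\|\,n$, grouping the summands of $S:=\sum_{j=1}^{n-1}j^{k(n-1)}$ by their residue modulo $p^r$ gives
\[
S\equiv\frac{n}{p^r}\sum_{a=1}^{p^r-1}a^{k(n-1)}\pmod{p^r},
\]
and since $k(n-1)\geq r$ in all nontrivial cases, only units contribute. I would first show $n$ must be squarefree. For $p$ odd and $r\geq 2$, I would evaluate the unit sum $\Sigma_p:=\sum_{u\in(\mathbb{Z}/p^r)^*}u^{k(n-1)}$ using the cyclic structure: if $g$ generates the units and $d=\gcd(k(n-1),\phi(p^r))$, then $\Sigma_p=d\sum_{h\in H}h$ where $H=\langle g^d\rangle$ has order $t=\phi(p^r)/d$. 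A case split on whether the generator of $H$ is $\equiv 1\pmod p$ (equivalently, whether $t$ is a power of $p$) shows that $\Sigma_p$ is congruent modulo $p^r$ to either $0$ or $-p^{r-1}$; in both situations $S\equiv -1\pmod{p^r}$ is impossible, since the alternative $n/p\equiv 1\pmod{p^r}$ is blocked by $p\mid n/p$. The case $4\mid n$ is excluded by a direct calculation modulo $4$. The main obstacle is the ``$t$ is a $p$-power'' subcase, which requires the identity
\[
\sum_{y=0}^{p^s-1}(1+p^{r-s}y)\equiv p^s\pmod{p^r},
\]
an elementary but slightly delicate summation, whose quadratic correction vanishes because $p$ is odd.

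Once squarefreeness is established, write $n=2^\epsilon p_1\cdots p_s$. For each odd prime $p_i\mid n$, the evaluation of $\Sigma_{p_i}$ simplifies: $\Sigma_{p_i}\equiv -1\pmod{p_i}$ if $(p_i-1)\mid k(n-1)$, and $\Sigma_{p_i}\equiv 0\pmod{p_i}$ otherwise. The congruence $S\equiv -1\pmod{p_i}$ therefore forces both $(p_i-1)\mid k(n-1)$ and the Giuga condition $p_i\mid(n/p_i-1)$ simultaneously. For $p=2$ (when $\epsilon=1$), the Giuga condition holds automatically and $p-1=1$ imposes no restriction on $k$. Hence $n\in\mathcal{G}$, and taking the lcm over $i$ yields $\lambda(n)\mid k(n-1)$. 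A final elementary gcd manipulation -- writing $\lambda(n)=d\lambda'$ and $n-1=dm'$ with $\gcd(\lambda',m')=1$, so that $\lambda(n)\mid k(n-1)$ becomes $\lambda'\mid km'$ and hence $\lambda'\mid k$ -- converts this into the desired $\lambda(n)/\gcd(\lambda(n),n-1)\mid k$.
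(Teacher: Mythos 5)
Your proof is correct, and the interesting difference lies in the direction (i)$\Rightarrow$(ii). The paper disposes of this implication almost immediately by citing an external result (Theorem 2.3 of the reference [WONG]) which directly delivers the three structural facts you work to establish: that $n$ is squarefree, that $p\mid(n/p-1)$ for every prime $p\mid n$, and that $(p-1)\mid k(n/p-1)$; from there the paper only needs the observation $n-1\equiv n/p-1 \pmod{p-1}$ and the same gcd bookkeeping you perform at the end. Your plan instead reproves the relevant special case of that cited theorem from scratch: the CRT reduction $S\equiv (n/p^r)\sum_{a=1}^{p^r-1}a^{k(n-1)}\pmod{p^r}$, the evaluation of the unit sum over the cyclic group $(\mathbb{Z}/p^r)^*$ as $d\sum_{h\in H}h$ with the dichotomy $\Sigma_p\equiv 0$ or $-p^{r-1}\pmod{p^r}$ (which kills $r\geq 2$ for odd $p$ since both values are divisible by $p$), the direct check modulo $4$, and the standard evaluation $\sum_{a=1}^{p-1}a^m\equiv -1$ or $0\pmod p$ in the squarefree case, which simultaneously forces the Giuga condition and $(p-1)\mid k(n-1)$. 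All of these steps check out (in particular the quadratic term $p^{r-s}\cdot p^s(p^s-1)/2$ is indeed a multiple of $p^r$ because $p$ is odd, and $k(n-1)\geq n-1\geq r$ justifies discarding non-units). What your route buys is a self-contained proof with no black box; what the paper's route buys is brevity. Your (ii)$\Rightarrow$(i) argument is essentially identical to the paper's.
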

\begin{proof} 
Assume that $n\in\mathcal{G}$ and that $\displaystyle{\frac{\lambda(n)}{\gcd(\lambda(n),n-1)}}$ divides $k$. Then we have:
$$\sum_{j=1}^{n-1}  j^{k(n-1)} = \sum_{j=1}^{n-1}  j^{\frac{\lambda(n)}{gcd(\lambda(n),n-1)} k' (n-1)}=\sum_{j=1}^{n-1}  j^{k''\lambda(n)}\equiv -1$$
due to Corollary 1.

Conversely, assume that $n\in\mathcal{G}_k$. This means that $\displaystyle{\sum_{j=1}^{n-1}j^{k(n-1)}\equiv -1}$ (mod $n$). As a consequence (see \cite{WONG}[Theorem 2.3]) we have that $p-1$ divides $k\left(n/p-1\right)$ and that $p$ divides $n/p-1$ for every $p$, prime divisor of $n$. and, moreover, $n$ is square-free. Since $n$ is square-free we have that $\lambda(n)=\textrm{lcm}\{p-1\ |\ \textrm{$p$ odd prime dividing $n$}\}$. Thus, $\lambda(n)$ divides $k(n-1)$ and, consequently, $\displaystyle{\frac{\lambda(n)}{\gcd(\lambda(n),n-1))}}$ divides $k$. It is enough to apply Proposition 3 with $B=1$ and $A=\frac{n-1}{\gcd(\lambda(n),n-1)}$ to finish the proof.
\end{proof}

If we put $k=1$ in the theorem above, we obtain again Corollary 2 and also its reciprocal thus completing the characterization of counterexamples to Giuga's conjecture without the use of Korselt's criterion.

\begin{cor} 
An integer $n$ is a counterexample to Giuga's conjecture if and only if it is both a Carmichael and a Giuga Number. In other words: $$\mathcal{G}_1=\mathcal{G}\cap \mathcal{C}.$$
\end{cor}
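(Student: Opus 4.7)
The plan is to obtain this corollary as an immediate specialization of Theorem 1 to the case $k=1$, relying on Carmichael's characterization of Carmichael numbers (namely, that a composite $n$ is Carmichael if and only if $\lambda(n)\mid(n-1)$), which is already used earlier in the excerpt.

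First I would dispose of the easy inclusion $\mathcal{G}\cap\mathcal{C}\subseteq\mathcal{G}_1$, which is nothing other than Corollary 2 and so needs no repetition. The new content is the reverse inclusion $\mathcal{G}_1\subseteq\mathcal{G}\cap\mathcal{C}$. For this, I would take $n\in\mathcal{G}_1$ and invoke Theorem 1 with $k=1$: this immediately gives $n\in\mathcal{G}$ together with the divisibility
$$\frac{\lambda(n)}{\gcd(\lambda(n),n-1)}\,\Big|\,1.$$
Hence $\lambda(n)=\gcd(\lambda(n),n-1)$, which is the same as $\lambda(n)\mid(n-1)$. By Carmichael's criterion (already invoked in the proof of Corollary 2), this is precisely the statement that $n\in\mathcal{C}$, so $n\in\mathcal{G}\cap\mathcal{C}$ as required.

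Putting both inclusions together yields $\mathcal{G}_1=\mathcal{G}\cap\mathcal{C}$, and recalling that $\mathcal{G}_1$ is by definition the set of composite solutions of Giuga's original congruence, this is exactly the claim about counterexamples.

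There is essentially no obstacle here: Theorem 1 has done all of the arithmetic work, and what remains is only to observe that the condition "$\lambda(n)/\gcd(\lambda(n),n-1)$ divides $1$" collapses to Carmichael's criterion. The only point worth emphasizing in the write-up is that the argument, being routed through Proposition 3 and Theorem 1, avoids Korselt's criterion entirely, in the same spirit as the remark following Corollary 2.
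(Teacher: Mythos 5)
Your proposal is correct and follows essentially the same route as the paper: both directions are read off from Theorem 1 at $k=1$, with the observation that $\lambda(n)/\gcd(\lambda(n),n-1)=1$ is exactly Carmichael's criterion $\lambda(n)\mid(n-1)$. The only cosmetic difference is that you cite Corollary 2 for the inclusion $\mathcal{G}\cap\mathcal{C}\subseteq\mathcal{G}_1$ while the paper derives it again from Theorem 1; these are interchangeable.
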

\begin{proof}
Let $n \in \mathcal{G}_1$. Then, by the previous theorem $n \in \mathcal{G} $ and $\gcd(\lambda(n),n-1)=\lambda(n)$; i.e., $n \in \mathcal{C}$.

Now, let $n \in \mathcal{C}\cap\mathcal{G}$. Then $\frac{\lambda(n)}{\gcd(\lambda(n),n-1)}=1$ and also the previous theorem implies that $n \in \mathcal{G}_1$. This completes the proof.
\end{proof}

By Theorem 1 we can find values of $k$ such that $\mathcal{G}_k$ is non-empty. To do so, we evaluate $\frac{\lambda(n)}{gcd(\lambda(n),n-1)}$ for every known Giuga Number. Thus, we will have thirteen values of $k$ for which $\mathcal{G}_{tk}$ is known to be nonempty for any $t$. In the case $t=1$, they are:

$$\mathcal{G}_4=\{30, ...\};$$
$$\mathcal{G}_{60}=\{30, 858, ...\};$$
$$\mathcal{G}_{120}=\{30, 858, 1772, ...\};$$
$$\mathcal{G}_{2320}=\{30, 66198,...\};$$
$$\mathcal{G}_{1552848}=\{30,2214408306,...\};$$
$$\mathcal{G}_{10080}=\{30,858, 24423128562,...\};$$
$$\mathcal{G}_{139714902540}=\{30,858,432749205173838,...\};$$
$$\mathcal{G}_{93294624780}=\{30,858,14737133470010574,...\};$$
$$\mathcal{G}_{228657996794220}=\{30,858,550843391309130318,...\};$$
$$\mathcal{G}_{4756736241732916394976}=\{30,244197000982499715087866346,...\};$$
$$\mathcal{G}_{20024071474861042488900}=\{30,554079914617070801288578559178,...\};$$
$$\mathcal{G}_{2176937111336664570375832140}=\{30,858,1910667181420507984555759916338506,...\};$$
$$\mathcal{G}_{15366743578393906356665002406454800354974137359272445859047945613961394951904884493965220}$$ $$=\{30,858,42000179497077470620387115096706566324041957537516306092287644$$ $$\hspace{-5cm} 16142557211582098432545190323474818,...\}.$$

Given any positive integer $n$, Theorem 1 gives a complete description of the set $\mathcal{K}_n$ as can be seen in the following corollary.

\begin{cor} 
Let $n$ be any positive integer. Then:
$$\mathcal{K}_n =\begin{cases}  \left\{\frac{t\lambda(n)}{gcd(\lambda(n),n-1)}\ |\ t\in \mathbb{N} \right\}, & \textrm{if $n \in \mathcal{G} $;}\\ \emptyset, & \textrm{otherwise.}\\ \end{cases}$$
\end{cor}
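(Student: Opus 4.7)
The plan is to derive this corollary as a direct reformulation of Theorem~1, using nothing more than the Remark preceding it. That Remark records the bridge $k \in \mathcal{K}_n \iff n \in \mathcal{G}_k$, which translates the question ``which $k$ work for a fixed $n$?'' into the question ``for which $k$ does this fixed $n$ lie in $\mathcal{G}_k$?'', and the latter is answered completely by Theorem~1. Note that the compositeness hypothesis needed for this bridge is already built into the definition of $\mathcal{G}_k$, so the two cases in the statement correspond exactly to the two possible outcomes of Theorem~1 applied at $n$.

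First I would split on whether $n$ is a Giuga Number. If $n \in \mathcal{G}$, Theorem~1 says that $n \in \mathcal{G}_k$ holds precisely when $m := \lambda(n)/\gcd(\lambda(n), n-1)$ divides $k$; the set of positive integer multiples of $m$ is exactly $\{tm \mid t \in \mathbb{N}\}$, which is the first branch of the claimed formula. If instead $n \notin \mathcal{G}$, then the very same theorem tells us that $n \in \mathcal{G}_k$ forces $n \in \mathcal{G}$, which is a contradiction; hence no positive integer $k$ can belong to $\mathcal{K}_n$, and $\mathcal{K}_n = \emptyset$, giving the second branch.

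There is essentially no obstacle here: the corollary is a pure repackaging of Theorem~1 through the $\mathcal{K}_n \leftrightarrow \mathcal{G}_k$ correspondence. The only thing worth checking is that the divisibility condition ``$m$ divides $k$'' and the parametrisation ``$k = tm$ for some $t \in \mathbb{N}$'' describe the same subset of $\mathbb{N}$, which is immediate.
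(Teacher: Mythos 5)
Your derivation is exactly the one the paper intends: the corollary is stated there without any proof, offered as an immediate repackaging of Theorem~1 through the correspondence $k \in \mathcal{K}_n \Leftrightarrow n \in \mathcal{G}_k$, and for composite $n$ your two-case argument is complete and correct.

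There is, however, one point where your argument (and, to be fair, the paper's own statement) genuinely fails: prime $n$. You assert that ``the compositeness hypothesis needed for this bridge is already built into the definition of $\mathcal{G}_k$,'' but that is precisely why the bridge breaks at the primes. If $n=p$ is prime, then $p \notin \mathcal{G}_k$ for every $k$, since membership in $\mathcal{G}_k$ requires compositeness; your argument therefore concludes $\mathcal{K}_p = \emptyset$. Yet Fermat's little theorem gives $j^{k(p-1)} \equiv 1 \pmod{p}$ for every $j$ with $1 \leq j \leq p-1$, hence $\sum_{j=1}^{p-1} j^{k(p-1)} \equiv p-1 \equiv -1 \pmod{p}$ and $\mathcal{K}_p = \mathbb{N}$. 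In other words, the equivalence $k \in \mathcal{K}_n \Leftrightarrow n \in \mathcal{G}_k$ recorded in the Remark is only valid for composite $n$; for prime $n$ the implication $k \in \mathcal{K}_n \Rightarrow n \in \mathcal{G}_k$ is false. A correct proof must either restrict $n$ to be composite (which is all Theorem~1 addresses anyway) or add a third case $\mathcal{K}_n = \mathbb{N}$ for $n$ prime or $n=1$. This defect is inherited from the paper's own phrase ``any positive integer,'' but your proposal explicitly claims the compositeness issue is harmless, and that is the one step that is actually wrong.
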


\begin{rem}
Observe that $\mathcal{K}_n=\mathbb{N}$ if and only if $n$ is a counterexample to Giuga's conjecture.
\end{rem}

\section{Reflecting about Giuga's conjecture }

In recent work by W. D. Banks, C. W. Nevans, and C. Pomerance  \cite{POM}, the following bounds were given:

\begin{teor} For any fixed $ \varepsilon> 0$, $\beta=0.3322408$ and all sufficiently large X, we have
$$| \{n<X | n \in \mathcal{C} \} | \geq X^{\beta-\varepsilon} \textrm{ (due to G. Harman \cite{HAR} )}$$
$$| \{n<X | n \in \mathcal{C} \backslash \mathcal{G}_1 \} | \geq X^{\beta-\varepsilon} $$
\end{teor}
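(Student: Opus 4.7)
The first inequality is a direct citation of Harman's result \cite{HAR}, so the work lies entirely in the second. My plan is to combine Harman's lower bound for $|\mathcal{C} \cap [1,X]|$ with an upper bound for $|\mathcal{G}_1 \cap [1,X]|$: by Corollary 3 we have $\mathcal{G}_1 = \mathcal{G} \cap \mathcal{C}$, so it in fact suffices to control $|\mathcal{G} \cap [1,X]|$ and then subtract. Concretely, I aim to establish an estimate of the shape $|\mathcal{G} \cap [1,X]| = O((\log X)^C)$ for some absolute constant $C$; any such polylogarithmic bound is negligible against $X^{\beta-\varepsilon}$ and thus survives subtraction from Harman's count.

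To obtain such a bound I would exploit the sum--product characterization of Proposition 1(ii): a squarefree composite $n = p_1 \cdots p_k$ with $p_1 < \cdots < p_k$ is a Giuga number if and only if $\sum_i 1/p_i - \prod_i 1/p_i \in \mathbb{N}$. Unwinding this equation inductively in $k$ should show that once $p_1,\dots,p_{k-1}$ are fixed, $p_k$ is confined to finitely many possibilities, and moreover that the smallest Giuga number with exactly $k$ prime factors grows at least exponentially in $k$ (this is consistent with the tabulation in \cite{BOR}, where the sizes jump from $30$ to roughly $10^{127}$ as $k$ moves from $3$ to $10$). Together these facts give $|\mathcal{G} \cap [1,X]| = O(\log X)$.

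Putting the pieces together, for all sufficiently large $X$,
\[
|\{n<X : n \in \mathcal{C} \setminus \mathcal{G}_1\}| \;\geq\; |\mathcal{C} \cap [1,X]| - |\mathcal{G} \cap [1,X]| \;\geq\; X^{\beta-\varepsilon} - O(\log X) \;\geq\; X^{\beta - 2\varepsilon},
\]
and relabeling $\varepsilon$ recovers the statement.

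The main obstacle is precisely the quantitative bound $|\mathcal{G} \cap [1,X]| = O((\log X)^C)$. The results in \cite{BOR} give finiteness of Giuga numbers with any fixed number of prime factors, but upgrading this into an \emph{effective} polylogarithmic count demands a uniform lower bound on the size of the smallest Giuga number with $k$ prime factors as a function of $k$, and the Giuga sum--product equation admits just enough slack at each inductive step to make this nontrivial. A backup strategy, should this direct counting estimate resist, is to open up Harman's construction itself and verify that the Carmichael numbers it produces fail the Giuga condition $p \mid n/p - 1$ for at least one prime divisor by design; this avoids the counting lemma but is structurally more invasive, as one must check that the extra constraint costs at most a factor $X^{\varepsilon}$ in the lower bound.
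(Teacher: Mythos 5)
The paper offers no proof of this statement at all: Theorem 2 is quoted from Banks, Nevans and Pomerance \cite{POM}, with the first inequality credited to Harman \cite{HAR}. So the only question is whether your argument would actually establish the second inequality, and it would not. Everything hinges on your counting estimate $|\mathcal{G}\cap[1,X]|=O((\log X)^C)$, which is not known and does not follow from the ingredients you invoke. The inductive unwinding of $\sum_i 1/p_i-\prod_i 1/p_i\in\mathbb{N}$ does show that for each fixed $k$ there are only finitely many Giuga numbers with exactly $k$ prime factors (and the primorial bound forces $k\ll\log X/\log\log X$ for $n\le X$), but the number of admissible choices at each inductive step depends on the previously chosen primes and the resulting bounds grow extremely rapidly with $k$, exactly as in the classical Egyptian-fraction counts; nothing like ``boundedly many Giuga numbers with $k$ factors, uniformly in $k$'' is available, so summing over $k\le\log X/\log\log X$ yields nothing polylogarithmic. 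Indeed, the best upper bound quoted in this very paper (Luca--Pomerance--Shparlinski) is $X^{1/2}/(\log X)^2$, and since $\beta=0.3322408<\tfrac12$ that is far too large to subtract from Harman's $X^{\beta-\varepsilon}$. Your subtraction scheme therefore cannot be completed with any known counting result, and proving the polylog bound you need would be a major advance in its own right.

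Your ``backup strategy'' is in fact the correct route and is essentially what \cite{POM} does, and it is less invasive than you fear: by Proposition 1 ii), any Giuga number $n$ satisfies $\sum_{p\mid n}1/p>1+\prod_{p\mid n}1/p>1$, whereas the Carmichael numbers produced by the Harman/Alford--Granville--Pomerance machinery have smallest prime factor large enough, relative to their number of prime factors, that $\sum_{p\mid n}1/p<1$. Hence none of them lies in $\mathcal{G}\supseteq\mathcal{G}_1=\mathcal{G}\cap\mathcal{C}$, the entire Harman count already sits inside $\mathcal{C}\setminus\mathcal{G}_1$, and no subtraction and no loss of a factor $X^{\varepsilon}$ is needed.
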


The authors of the aforementioned paper consider the above bounds to be consistent with Giuga's conjecture. We believe, however, that the same consideration could be made with respect to conjectures that are actually false, such as  $\mathcal{G}_4=\emptyset$ or $\mathcal{G}_{1552848}=\emptyset$. What is clearly consistent is the assumption that the elements of $\mathcal{G}_{k}$, regardless of the value of $k$, are extremely ``scarce''.

Furthermore, the authors of the present paper, in view of the generalization presented here, are convinced that Giuga's conjecture is not based on any sound logical-mathematical consideration and that its strength rests on the extreme rarity of Giuga Numbers, combined with the computational difficulties that the search for new numbers belonging to this family entail, as well as with the null asymptotic density of Carmichael numbers. In fact, if we may be forgiven the joke, we might conjecture -without any fear of our conjecture being refuted in many years- that $\mathcal{G}_2=\mathcal{G}_3=\mathcal{G}_5=\emptyset$ or, to be even more daring, that  $\mathcal{G}_p=\emptyset$ for all prime $p$. Of course, Giuga's conjecture has the honour of being the strongest of all of these conjectures. In fact, in virtue of Corollary 3, should it be refuted, all the others would fall with it.

\bibliography{./refgiugabis}
\bibliographystyle{plain}

\end{document}